\newtheorem{theorem}{Theorem}
\newtheorem{lemma}[theorem]{Lemma}
\newtheorem{proposition}[theorem]{Proposition}
\theoremstyle{definition}
\newtheorem{definition}[theorem]{Definition}
\theoremstyle{remark}
\newtheorem{remark}[theorem]{\bf Remark}
\numberwithin{equation}{section}
\numberwithin{theorem}{section}
\newcommand{\intav}[1]{\mathchoice {\mathop{\vrule width 6pt height 3 pt depth  -2.5pt
\kern -8pt \intop}\nolimits_{\kern -6pt#1}} {\mathop{\vrule width
5pt height 3  pt depth -2.6pt \kern -6pt \intop}\nolimits_{#1}}
{\mathop{\vrule width 5pt height 3 pt depth -2.6pt \kern -6pt
\intop}\nolimits_{#1}} {\mathop{\vrule width 5pt height 3 pt depth
-2.6pt \kern -6pt \intop}\nolimits_{#1}}}
\newcommand{\intavl}[1]{\mathchoice {\mathop{\vrule width 6pt height 3 pt depth  -2.5pt
\kern -8pt \intop}\limits_{\kern -6pt#1}} {\mathop{\vrule width 5pt
height 3  pt depth -2.6pt \kern -6pt \intop}\nolimits_{#1}}
{\mathop{\vrule width 5pt height 3 pt depth -2.6pt \kern -6pt
\intop}\nolimits_{#1}} {\mathop{\vrule width 5pt height 3 pt depth
-2.6pt \kern -6pt \intop}\nolimits_{#1}}}
\newcommand{\R}{\mathbb{R}}
\newcommand{\N}{\mathbb{N}}
\newcommand{\al}{\alpha}
\newcommand{\la}{\lambda}
\newcommand{\te}{{\theta}}
\newcommand{\Te}{{\Theta}}
\newcommand{\ve}{{\varepsilon}}
\newcommand{\vr}{{\varphi}}
\newcommand{\und}{\underline}
\def\ovt{\underline{\theta}}
\def\proj{{\rm proj}}
\def\hd{{\rm HD}}
\begin{document}

\title[Marstrand's Theorem for products of Cantor sets]{A combinatorial proof of Marstrand's Theorem for products of regular Cantor sets}

\author[Yuri Lima]{Yuri Lima}
\address{Instituto Nacional de Matem\'atica Pura e Aplicada, Estrada Dona Castorina 110, 22460-320, Rio de Janeiro, Brasil.}
\email{yurilima@impa.br}

\author[Carlos Gustavo Moreira]{Carlos Gustavo Moreira}
\address{Instituto Nacional de Matem\'atica Pura e Aplicada, Estrada Dona Castorina 110, 22460-320, Rio de Janeiro, Brasil.}
\email{gugu@impa.br}
\



\keywords{Cantor sets, Hausdorff dimension, Marstrand theorem.}

\begin{abstract}
In a paper from 1954 Marstrand proved that if $K\subset\R^2$ has Hausdorff dimension greater than $1$, then
its one-dimensional projection has positive Lebesgue measure for almost-all directions. In this article, we
give a combinatorial proof of this theorem when $K$ is the product of regular Cantor sets of class $C^{1+\al}$,
$\al>0$, for which the sum of their Hausdorff dimension is greater than $1$.
\end{abstract}

\maketitle

\section{Introduction}

If $U$ is a subset of $\R^n$, the diameter of $U$ is $|U|=\sup\{|x-y|;x,y\in U\}$
and, if $\mathcal U$ is a family of subsets of $\R^n$, the {\it diameter} of $\mathcal U$ is defined as
$$\left\|\mathcal U\right\|=\sup_{U\in\,\mathcal U}|U|.$$
Given $d>0$, the {\it Hausdorff $d$-measure} of a set $K\subseteq\R^n$ is
$$m_d(K)=\lim_{\ve\rightarrow 0}\left(\inf_{\mathcal U\text{ covers }K\atop{\left\|\mathcal U\right\|<\,\ve}}
\sum_{U\in\,\mathcal U}|U|^d\right).$$
In particular, when $n=1$, $m=m_1$ is the Lebesgue measure of Lebesgue measurable sets on $\R$.
It is not difficult to show that there exists a unique $d_0\ge0$ for which $m_d(K)=+\infty$ if $d<d_0$ and
$m_d(K)=0$ if $d>d_0$. We define the Hausdorff dimension of $K$ as ${\rm HD}(K)=d_0$. Also, for each $\te\in\R$,
let $v_\te=(\cos \te,\sin\te)$, $L_\te$ the line in $\R^2$ through the origin containing $v_\te$ and
$\proj_\te:\R^2\rightarrow L_\te$ the orthogonal projection. From now on, we'll restrict $\te$ to
the interval $[-\pi/2,\pi/2]$, because $L_{\te}=L_{\te+\pi}$.

In $1954$, J. M. Marstrand \cite{Ma} proved the following result on the fractal dimension of plane sets.
\vspace{.2cm}

\noindent{\bf Theorem. }{\it If $K\subseteq\R^2$ is a Borel set such that $\hd(K)>1$, then
$m(\proj_\te(K))>0$ for $m$-almost every $\te\in\R$.}
\vspace{.2cm}

The proof is based on a qualitative characterization of the ``bad" angles
$\te$ for which the result is not true. Specifically, Marstrand exhibits a Borel measurable function
$f(x,\te)$, $(x,\te)\in\R^2\times[-\pi/2,\pi/2]$, such that $f(x,\te)=+\infty$ for $m_d$-almost every $x\in K$,
for every ``bad" angle. In particular,
\begin{equation}\label{equacao 1}
\int_K f(x,\te)dm_d(x)=+\infty.
\end{equation}
On the other hand, using a version of Fubini's Theorem, he proves that
$$\int_{-\pi/2}^{\pi/2}d\te\int_K f(x,\te)dm_d(x)=0$$
which, in view of (\ref{equacao 1}), implies that
$$m\left(\{\te\in[-\pi/2,\pi/2]\,;\, m(\proj_\te(K))=0\}\right)=0.$$
These results are based on the analysis of rectangular densities
of points.

Many generalizations and simpler proofs have appeared since. One of them came in 1968 by R. Kaufman
who gave a very short proof of Marstrand's theorem using methods of potential theory. See \cite{K}
for his original proof and \cite{M1}, \cite{PT} for further discussion.

In this article, we prove a particular case of Marstrand's Theorem.

\begin{theorem}\label{theorem 1}
If $K_1,K_2$ are regular Cantor sets of class $C^{1+\al}$, $\al>0$, such that
$d={\rm HD}(K_1)+{\rm HD}(K_2)>1$, then $m\left(\proj_\te(K_1\times K_2)\right)>0$ for $m$-almost every $\te\in\R$.
\end{theorem}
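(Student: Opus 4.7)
My plan is to replace Marstrand's qualitative density/Fubini argument by an explicit combinatorial count on the Markov covers of $K=K_1\times K_2$ supplied by the $C^{1+\al}$ structure. For each $n\ge 1$, bounded distortion of $C^{1+\al}$ regular Cantor sets yields, via an appropriate stopping-time construction, a natural family $\mathcal{Q}_n$ of approximate $r_n\times r_n$ ``boxes'' covering $K$, with $r_n\to 0$ and $|\mathcal{Q}_n|\asymp r_n^{-d}$, the implicit constants uniform in $n$. The central object is the normalized multiplicity function
\[
f_n^{\te}(y)\;=\;r_n^{\,d-1}\sum_{Q\in\mathcal{Q}_n}\mathbf{1}_{\proj_\te(Q)}(y),\qquad y\in L_\te,
\]
whose total mass $\int_{L_\te}f_n^\te(y)\,\dy$ is comparable to $1$ uniformly in $n$ and $\te$, since each projected piece $\proj_\te(Q)$ is an interval of length of order $r_n$.

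The key step is the uniform $L^2$-bound
\[
\sup_{n\ge 1}\int_{-\pi/2}^{\pi/2}\!\int_{L_\te}\bigl(f_n^\te(y)\bigr)^2\,\dy\,d\te\;<\;+\infty,
\]
proved by expanding the square and exchanging orders of integration: the right-hand side becomes a double sum, over pairs $(Q,Q')\in\mathcal{Q}_n\times\mathcal{Q}_n$, of the quantity $\int_{-\pi/2}^{\pi/2}m\bigl(\proj_\te(Q)\cap\proj_\te(Q')\bigr)\,d\te$. An elementary planar computation gives that this quantity is of order $r_n\cdot\min\bigl(1,\,r_n/\mathrm{dist}(Q,Q')\bigr)$. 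I then group the pairs dyadically by distance: for $r_n\le 2^{-k}\le 1$, the regular Cantor structure supplies at most $C\,|\mathcal{Q}_n|\,(2^{-k}/r_n)^d$ pairs with $\mathrm{dist}(Q,Q')\asymp 2^{-k}$. Collecting the estimates, the $k$-th dyadic block contributes a constant multiple of $2^{-k(d-1)}$, and the resulting geometric series converges \emph{precisely} because $d>1$; the diagonal pairs at distance $<r_n$ contribute a negligible $O(r_n^{d-1})$ error.

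From the uniform $L^2$-bound, Fubini and Fatou produce, for almost every $\te$, a subsequence along which $\int_{L_\te}(f_n^\te)^2\,\dy$ stays bounded. Cauchy--Schwarz then yields
\[
m\bigl(\mathrm{supp}\,f_n^\te\bigr)\;\ge\;\frac{\bigl(\int_{L_\te}f_n^\te\,\dy\bigr)^{2}}{\int_{L_\te}\bigl(f_n^\te\bigr)^{2}\,\dy}\;\ge\;c\;>\;0
\]
along that subsequence. Since $\mathrm{supp}\,f_n^\te$ lies in an $O(r_n)$-neighborhood of $\proj_\te(K_1\times K_2)$, passing to the limit and using outer regularity of Lebesgue measure gives $m(\proj_\te(K_1\times K_2))\ge c>0$, as required. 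The main technical hurdle I foresee is not the dyadic series but the uniform pair-count $|\mathcal{Q}_n|\,(2^{-k}/r_n)^d$: it is exactly the $\al$-H\"older modulus of continuity of the derivatives of the Markov maps that yields bounded distortion and hence the existence of pieces of comparable size at each level, which in turn promotes the heuristic count into a rigorous inequality with constants independent of $n$ and $k$.
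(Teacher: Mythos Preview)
Your proposal is correct and follows essentially the same route as the paper: both arguments build a scale-$\rho$ cover by comparable boxes from bounded distortion, bound the angle-integral of the pair-overlap count by a dyadic-in-distance summation that converges because $d>1$ (your $\int\!\!\int (f_n^\te)^2$ equals, up to a factor $r_n^{2d-1}$, the paper's $E((K_1\times K_2)_\rho)=\int N_\rho(\te)\,d\te$), and then apply Cauchy--Schwarz to turn the second-moment bound into a lower bound on the measure of the projected support. The only cosmetic differences are your use of the $L^2$ language and Fatou where the paper uses the raw pair count $N_\rho(\te)$ with Chebyshev and a $\limsup$ set, and your $O(r_n)$-neighborhood limit in place of the paper's nested sequence of decompositions.
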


The argument also works to show that the push-forward measure of the restriction of $m_d$ to $K_1\times K_2$,
defined as $\mu_\te=(\proj_\te)_*(m_d|_{K_1\times K_2})$, is absolutely continuous with respect to $m$,
for $m$-almost every $\te\in\R$.
Denoting its Radon-Nykodim derivative by $\chi_\te=d\mu_\te/dm$, we also prove the following result.

\begin{theorem}\label{theorem 2}
$\chi_\te$ is an $L^2$ function for $m$-almost every $\te\in\R$.
\end{theorem}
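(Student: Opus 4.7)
The plan is to establish the energy estimate
\[
\int_{-\pi/2}^{\pi/2}\|\chi_\te\|_{L^2(m)}^{2}\,d\te<\infty,
\]
from which the conclusion follows immediately by Tonelli's theorem. This is the standard Kaufman-style approach, and the main effort goes into reducing the angular average of $\|\chi_\te\|_{L^2}^{2}$ to the $1$-energy of $m_d$ restricted to $K_1\times K_2$, whose finiteness is forced by the hypothesis $d>1$.

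First I would approximate $\chi_\te$ by the finite-scale averages $\chi_\te^{(r)}(x)=\mu_\te([x-r,x+r])/(2r)$, which converge to $\chi_\te$ almost everywhere by Lebesgue's differentiation theorem (using the absolute continuity of $\mu_\te$ already established around Theorem~\ref{theorem 1}). Expanding $(\chi_\te^{(r)})^{2}$, applying Fubini, and estimating the intersection of two intervals of length $2r$ gives
\[
\|\chi_\te^{(r)}\|_{L^2}^{2}\le\frac{1}{2r}\int\!\!\int_{(K_1\times K_2)^{2}}\mathbf{1}_{\{|\proj_\te(y-z)|<2r\}}\,dm_d(y)\,dm_d(z),
\]
and Fatou's lemma yields the same bound, with a $\liminf_{r\to 0^+}$ on the right, for $\|\chi_\te\|_{L^2}^{2}$.

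Next I would integrate in $\te$ and swap limits by Fatou together with Tonelli. Writing $y-z=|y-z|(\cos\phi,\sin\phi)$ one has $|\proj_\te(y-z)|=|y-z|\,|\cos(\te-\phi)|$, and elementary trigonometry gives
\[
\liminf_{r\to 0^+}\frac{m\bigl\{\te:|\proj_\te(y-z)|<2r\bigr\}}{2r}\le\frac{C}{|y-z|}
\]
for an absolute constant $C$; combining this with the previous step,
\[
\int_{-\pi/2}^{\pi/2}\|\chi_\te\|_{L^2}^{2}\,d\te\le C\int\!\!\int_{(K_1\times K_2)^{2}}\frac{dm_d(y)\,dm_d(z)}{|y-z|}.
\]

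Finally, the $C^{1+\al}$ regularity of $K_1$ and $K_2$ makes $m_d|_{K_1\times K_2}$ Ahlfors $d$-regular, that is $m_d\bigl(B(z,\rho)\cap(K_1\times K_2)\bigr)\le C_0\rho^{d}$ for small $\rho$, so integration in polar coordinates yields
\[
\int_{K_1\times K_2}\frac{dm_d(y)}{|y-z|}\le C_0\int_0^{\sqrt{2}}\rho^{d-2}\,d\rho<\infty
\]
precisely because $d>1$. The main obstacle is the first step: one needs $\chi_\te\in L^1_{\mathrm{loc}}$ for $m$-a.e.\ $\te$ so that its averages $\chi_\te^{(r)}$ converge pointwise, which is exactly the absolute continuity content accompanying Theorem~\ref{theorem 1}. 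Once granted, the remainder is a classical energy computation whose only quantitative input is $d_1+d_2>1$; in the combinatorial language of the paper, the double integral above corresponds to summing over pairs of product cylinders at all scales, the inner angular estimate to counting the angles for which two given cylinders have overlapping projections, and the $1$-energy bound to the geometric series that closes precisely when $d>1$.
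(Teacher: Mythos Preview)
Your argument is the classical Kaufman energy method and it is essentially correct. One small slip: you state only the $\liminf$ bound $\liminf_{r\to 0}\tfrac{1}{2r}\,m\{\te:|\proj_\te(y-z)|<2r\}\le C/|y-z|$, but to pass the $\liminf$ under the $(y,z)$-integral you need the \emph{uniform} bound $\tfrac{1}{2r}\,m\{\te:|\proj_\te(y-z)|<2r\}\le \pi/|y-z|$ for all $r>0$, which in fact holds by the same trigonometric computation (using $\arcsin t\le \tfrac{\pi}{2}t$); with this the chain of inequalities closes cleanly. The ``polar coordinates'' phrase for the energy estimate is informal, but the layer-cake bound using $m_d(B(z,\rho))\le C_0\rho^d$ from Lemma~\ref{lemma 3} gives $\int|y-z|^{-1}dm_d(y)\lesssim\int_0^{\mathrm{diam}}\rho^{d-2}\,d\rho<\infty$ exactly as you intend.

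Your route is, however, genuinely different from the paper's. The paper stays inside its combinatorial framework: for $\te\in G_{1/m}$ it has the discrete bound $N_{(K_1\times K_2)_{\rho_n}}(\te)\le c_3\,\rho_n^{1-2d}m$ along a subsequence of scales (this is the output of Proposition~\ref{proposition 1} plus Chebyshev), then compares the conditional expectations $\chi_{\te,n}$ of $\chi_\te$ on the partitions $\{J_i^{\rho_n}\}$ to the box-counts $s_{\rho_n,i}$, obtaining $\|\chi_{\te,n}\|_2^2\le c_1^2\,\rho_n^{2d-1}N_{(K_1\times K_2)_{\rho_n}}(\te)\le c_1^2c_3 m$ uniformly in $n$, and concludes by Fatou. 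So the paper bounds $\|\chi_\te\|_2$ \emph{pointwise} in $\te$ (with a constant depending on which $G_{1/m}$ contains $\te$) using only finite sums over cylinder pairs, never writing an energy integral or invoking Fubini on $m_d\times m_d$; this is precisely the ``combinatorial'' proof advertised in the title. Your approach instead proves the integrated statement $\int\|\chi_\te\|_2^2\,d\te<\infty$ in one stroke via the $1$-energy, which is shorter and more standard but is exactly the potential-theoretic argument the paper sets out to bypass. As you note in your final paragraph, the two are morally the same computation: the paper's dyadic sum $\sum_s 2^{s(1-d)}$ in Proposition~\ref{proposition 1} is the discrete shadow of your integral $\int\rho^{d-2}\,d\rho$.
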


\begin{remark}
Theorem \ref{theorem 2}, as in this work, follows from most proofs of Marstrand's theorem
and, in particular, is not new as well.
\end{remark}

Our proof makes a study on the fibers
${\proj_\te}^{-1}(v)\cap(K_1\times K_2)$, $(\te,v)\in\R\times L_\te$, and relies on two facts:

\noindent (I) A regular Cantor set of Hausdorff dimension $d$ is regular in the sense that the $m_d$-measure
of small portions of it has the same exponential behavior.

\noindent (II) This enables us to conclude that, except for a small set of angles $\te\in\R$, the fibers
${\proj_\te}^{-1}(v)\cap(K_1\times K_2)$ are not concentrated in a thin region. As a consequence,
$K_1\times K_2$ projects into a set of positive Lebesgue measure.

The idea of (II) is based on the work \cite{Mo} of the second author. He proves that, if $K_1$ and $K_2$ are
regular Cantor sets of class $C^{1+\al}$, $\al>0$, and at least one of them is non-essentially affine
(a technical condition), then
the arithmetic sum $K_1+K_2=\{x_1+x_2;x_1\in K_1,x_2\in K_2\}$ has the expected Hausdorff dimension:
$${\rm HD}(K_1+K_2)=\min\{1,{\rm HD}(K_1)+{\rm HD}(K_2)\}.$$

Marstrand's Theorem for products of Cantor sets has many useful applications in dynamical
systems. It is fundamental in certain results
of dynamical bifurcations, namely homoclinic bifurcations in surfaces. For instance, in \cite{PY}
it is used to show that hyperbolicity is not prevalent in homoclinic bifurcations associated to
horseshoes with Hausdorff dimension larger than one; in \cite{MY} it is used to prove that stable
intersections of regular Cantor sets are dense in the region where the sum of their Hausdorff dimensions
is larger than one; in \cite{MY2} to show that, for homoclinic bifurcations associated to horseshoes with
Hausdorff dimension larger than one, typically there are open sets of parameters with positive Lebesgue
density at the initial bifurcation parameter corresponding to persistent homoclinic tangencies.

\section{Regular Cantor sets of class $C^{1+\al}$}

We say that $K\subset\R$ is a {\it regular Cantor set of class $C^{1+\al}$}, $\al>0$, if:
\begin{itemize}
\item[(i)] there are disjoint compact intervals $I_1,I_2,\dots,I_r\subseteq[0,1]$ such that $K\subset I_1\cup\cdots\cup I_r$
and the boundary of each $I_i$ is contained in $K$;
\item[(ii)] there is a $C^{1+\al}$ expanding map $\psi$ defined in a neighbourhood of $I_1\cup I_2\cup\cdots\cup I_r$
such that $\psi(I_i)$ is the convex hull of a finite union of some intervals $I_j$, satisfying:
\begin{itemize}
\item[(ii.1)] for each $i\in\{1,2,\ldots,r\}$ and $n$ sufficiently big, $\psi^n(K\cap I_i)=K$;
\item[(ii.2)] $K=\bigcap\limits_{n\in\N}\psi^{-n}(I_1\cup I_2\cup\cdots\cup I_r)$.
\end{itemize}
\end{itemize}

The set $\{I_1,\ldots,I_r\}$ is called a Markov partition of $K$. It defines an $r\times r$ matrix $B=(b_{ij})$ by
\begin{eqnarray*}
b_{ij}&=&1,\ \ \text{ if } \psi(I_i)\supseteq I_j\\
      &=&0,\ \ \text{ if } \psi(I_i)\cap I_j=\emptyset,
\end{eqnarray*}
which encodes the combinatorial properties of $K$. Given such matrix, consider the set
$\Sigma_B=\left\{\ovt=(\te_1,\te_2,\ldots)\in\{1,\ldots,r\}^\N\,;\,b_{\te_i\te_{i+1}}=1,\forall\, i\ge1\right\}$
and the shift transformation $\sigma:\Sigma_B\rightarrow\Sigma_B$ given by
$\sigma(\te_1,\te_2,\ldots)=(\te_2,\te_3,\ldots)$.

There is a natural homeomorphism between the pairs $(K,\psi)$ and $(\Sigma_B,\sigma)$. For each finite word
$\und a=(a_1,\ldots,a_n)$ such that $b_{a_ia_{i+1}}=1$, $i=1,\ldots,n-1$, the intersection
$$I_{\und a}=I_{a_1}\cap \psi^{-1}(I_{a_2})\cap\cdots\cap\psi^{-(n-1)}(I_{a_{n}})$$
is a non-empty interval with diameter $|I_{\und a}|=|I_{a_n}|/|(\psi^{n-1})'(x)|$ for some $x\in I_{\und a}$,
which is exponentially small if $n$ is large. Then, $\{h(\ovt)\}=\bigcap_{n\ge 1}I_{(\te_1,\ldots,\te_n)}$
defines a homeomorphism  $h:\Sigma_B\rightarrow K$ that commutes the diagram
$$
\xymatrix{
\Sigma_B\ar[r]^{\sigma} \ar[d]_{h}&\Sigma_B\ar[d]^{h}\\
K\ar[r]_{\psi}&K}
$$
If $\lambda=\sup\{|\psi'(x)|;x\in I_1\cup\cdots\cup I_r\}\in(1,+\infty)$, then
$\left|I_{(\te_1,\ldots,\te_{n+1})}\right|\ge\lambda^{-1}\cdot \left|I_{(\te_1,\ldots,\te_n)}\right|$
and so, for $\rho>0$ small and $\ovt\in\Sigma_B$, there is a positive integer $n=n(\rho,\ovt)$ such that
$$\rho\le\left|I_{(\te_1,\ldots,\te_n)}\right|\le\lambda\rho.$$

\begin{definition}
A $\rho$-decomposition of $K$ is any finite set $(K)_\rho=\{I_1,I_2,\ldots,I_r\}$ of disjoint closed
intervals of $\R$, each one of them intersecting $K$, whose union covers $K$ and such that
$$\rho\le |I_i|\le \lambda\rho\,,\ i=1,2,\ldots,r.$$
\end{definition}

\begin{remark}
Although $\rho$-decompositions are not unique, we use, for simplicity, the notation $(K)_\rho$ to denote
any of them. We also use the same notation $(K)_\rho$ to denote the
set $\cup_{I\in(K)_\rho}I\subset\R$ and the distinction between these two situations will be clear
throughout the text.
\end{remark}

Every regular Cantor set of class $C^{1+\al}$ has a $\rho$-decomposition for $\rho>0$ small: by the compactness
of $K$, the family $\left\{I_{\left(\te_1,\ldots,\te_{n(\rho,\ovt)}\right)}\right\}_{\ovt\in\Sigma_B}$ has a finite
cover (in fact, it is only necessary for $\psi$ to be of class $C^1$). Also, one can define $\rho$-decomposition
for the product of two Cantor sets $K_1$ and $K_2$, denoted by $(K_1\times K_2)_\rho$. Given $\rho\not=\rho'$
and two decompositions
$(K_1\times K_2)_{\rho'}$ and $(K_1\times K_2)_\rho$, consider the partial order
$$(K_1\times K_2)_{\rho'}\prec(K_1\times K_2)_\rho\ \iff\ \rho'<\rho\text{ and }
\bigcup_{Q'\in(K_1\times K_2)_{\rho'}}Q'\subseteq\bigcup_{Q\in(K_1\times K_2)_{\rho}}Q.$$
In this case, $\proj_\te((K_1\times K_2)_{\rho'})\subseteq\proj_\te((K_1\times K_2)_{\rho})$ for any $\te$.

A remarkable property of regular Cantor sets of class $C^{1+\al}$, $\al>0$, is bounded distortion.

\begin{lemma}\label{lemma 1}
Let $(K,\psi)$ be a regular Cantor set of class $C^{1+\al}$, $\al>0$, and $\{I_1,\ldots,I_r\}$
a Markov partition. Given $\delta>0$, there exists a constant $C(\delta)>0$, decreasing on $\delta$, with
the following property: if $x,y\in K$ satisfy
\begin{itemize}
\item[(i)] $|\psi^n(x)-\psi^n(y)|<\delta$;
\item[(ii)] The interval $[\psi^i(x),\psi^i(y)]$ is contained in $I_1\cup\cdots\cup I_r$, for $i=0,\ldots,n-1$,
\end{itemize}
then
$$e^{-C(\delta)}\le\dfrac{\left|(\psi^n)'(x)\right|}{\left|(\psi^n)'(y)\right|}\le e^{C(\delta)}\,.$$
In addition, $C(\delta)\rightarrow 0$ as $\delta\rightarrow 0$.
\end{lemma}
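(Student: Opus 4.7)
The plan is to pass to logarithms, reduce to a sum of Hölder oscillations of $\log|\psi'|$ along the orbit, and then use the expanding property of $\psi$ backwards to make this sum geometric.

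First I would write, by the chain rule,
$$\log\frac{|(\psi^n)'(x)|}{|(\psi^n)'(y)|}=\sum_{i=0}^{n-1}\Bigl(\log|\psi'(\psi^i(x))|-\log|\psi'(\psi^i(y))|\Bigr).$$
Since $\psi$ is $C^{1+\al}$ and expanding on a neighbourhood of the compact set $I_1\cup\cdots\cup I_r$, the derivative $\psi'$ is $\al$-Hölder continuous and bounded away from zero there; consequently $\log|\psi'|$ is $\al$-Hölder on $I_1\cup\cdots\cup I_r$, say with constant $H$. Hypothesis (ii) ensures each interval $[\psi^i(x),\psi^i(y)]$ lies inside this set, so each summand is bounded by $H\cdot|\psi^i(x)-\psi^i(y)|^\al$.

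Next I would control the intermediate distances by pulling back the final one. Let $\lambda_0=\inf\{|\psi'(z)|:z\in I_1\cup\cdots\cup I_r\}>1$. Applying the mean value theorem to $\psi^{n-i}$ on $[\psi^i(x),\psi^i(y)]$, which is legitimate by (ii), gives $|\psi^n(x)-\psi^n(y)|\ge\lambda_0^{n-i}|\psi^i(x)-\psi^i(y)|$, hence by (i),
$$|\psi^i(x)-\psi^i(y)|\le\lambda_0^{-(n-i)}\delta,\qquad i=0,1,\dots,n-1.$$
Summing the geometric series,
$$\left|\log\frac{|(\psi^n)'(x)|}{|(\psi^n)'(y)|}\right|\le H\delta^{\al}\sum_{j=1}^n\lambda_0^{-\al j}\le\frac{H\delta^{\al}}{\lambda_0^{\al}-1}=:C(\delta),$$
which is decreasing in $\delta$ and tends to $0$ as $\delta\to 0$, proving the lemma.

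The only delicate point is to justify that each $[\psi^i(x),\psi^i(y)]$ sits inside the domain where $\psi$ is $C^{1+\al}$ and where the uniform lower bound $|\psi'|\ge\lambda_0$ holds; this is precisely what hypothesis (ii) is designed to guarantee, and once it is in hand the rest of the argument is a routine telescoping over the orbit.
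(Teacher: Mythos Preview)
Your argument is correct and is precisely the classical bounded-distortion proof: take logarithms, use the $\al$-H\"older continuity of $\log|\psi'|$, and control the orbit distances via the uniform expansion to obtain a convergent geometric series. The paper does not supply its own proof of this lemma; it simply refers the reader to Chapter~4 of \cite{PT}, where the same approach is carried out, so there is nothing substantively different to compare.

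One small slip: the $C(\delta)=H\delta^{\al}/(\lambda_0^{\al}-1)$ you obtain is \emph{increasing} in $\delta$, not decreasing; the phrase ``decreasing on $\delta$'' in the statement is an infelicity of the paper that you should not echo. What matters, and what you correctly verify, is that $C(\delta)\to 0$ as $\delta\to 0$.
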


A direct consequence of bounded distortion is the required regularity of $K$, contained in the next result.

\begin{lemma}\label{lemma 2} Let $K$ be a regular Cantor set of class $C^{1+\al}$, $\al>0$, and let $d={\rm HD}(K)$. Then
$0<m_d(K)<+\infty$. Moreover, there is $c>0$ such that, for any $x\in K$ and $0\le r\le 1$,
$$c^{-1}\cdot r^d\le m_d(K\cap B_r(x))\le c\cdot r^d.$$
\end{lemma}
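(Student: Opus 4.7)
The strategy is to exploit bounded distortion (Lemma \ref{lemma 1}) together with the Markov self-similarity of $K$: first establish sharp estimates for the $m_d$-mass of cylinder intervals, then transfer them to balls. Throughout, I write $a \asymp b$ to mean $a/b$ is bounded above and below by positive constants depending only on $K$.

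The first step upgrades Lemma \ref{lemma 1} to a uniform comparability estimate: for every admissible word $\und a = (a_1,\dots,a_n)$ and every $y \in I_{\und a}$,
\begin{equation*}
|I_{\und a}| \asymp |I_{a_n}| \cdot |(\psi^{n-1})'(y)|^{-1},
\end{equation*}
with constants independent of $n$ and $\und a$; this follows on applying Lemma \ref{lemma 1} with $\delta$ equal to the diameter of $I_1 \cup \cdots \cup I_r$. Next I would construct a $d$-conformal probability measure $\mu$ on $K$ satisfying $\mu(I_{\und a}) \asymp |I_{\und a}|^d$, by taking a weak-$\ast$ subsequential limit of
\begin{equation*}
\mu_n = \frac{1}{Z_n} \sum_{|\und a| = n} |I_{\und a}|^d \, \delta_{x_{\und a}}, \qquad Z_n = \sum_{|\und a| = n} |I_{\und a}|^d,
\end{equation*}
for arbitrary choices $x_{\und a} \in I_{\und a}$. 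Bounded distortion and topological mixing of $B$ force $Z_n$ to lie between positive constants and guarantee the desired comparability for any weak-$\ast$ limit.

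From this, the lemma follows by localization. Given $x \in K$ with symbolic code $(\te_1,\te_2,\dots)$ and $r \in (0,1]$, I pick the unique $n$ with $|I_{(\te_1,\dots,\te_n)}| \asymp r$. The bounded geometry of the Markov partition ensures that $B_r(x)$ meets only a bounded number of generation-$n$ cylinders yet contains at least one of them, so summing the cylinder masses yields $\mu(B_r(x)) \asymp r^d$. The covering $K \subseteq \bigcup_{|\und a|=n} I_{\und a}$ gives $m_d(K) \le \liminf_n Z_n < \infty$; the mass distribution principle applied to $\mu$ gives $m_d(K) > 0$; and both bounds localize to $B_r(x)$ by the same argument, producing $c^{-1} r^d \le m_d(K \cap B_r(x)) \le c \cdot r^d$.

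The main obstacle is establishing $Z_n \asymp 1$, which is equivalent to identifying $d$ as the zero of the topological pressure $t \mapsto P(-t \log |\psi'|)$ of the Ruelle transfer operator associated with $\psi$. It is precisely here that both bounded distortion (to control variation of $|(\psi^n)'|$ along orbits) and the $C^{1+\al}$ hypothesis (for the H\"older thermodynamic formalism producing a spectral gap) are essential; everything else is careful but routine bookkeeping.
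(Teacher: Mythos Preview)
The paper does not actually prove Lemma~\ref{lemma 2}; immediately after stating Lemmas~\ref{lemma 2} and~\ref{lemma 3} it simply refers the reader to Chapter~4 of Palis--Takens~\cite{PT}. Your sketch is the standard argument and is essentially what one finds there: bounded distortion upgrades to uniform cylinder-length comparability, Bowen's pressure formula identifies $d=\hd(K)$ as the unique zero of $t\mapsto P(-t\log|\psi'|)$ (whence $Z_n\asymp 1$), the resulting Gibbs measure is Ahlfors $d$-regular, and the mass distribution principle then transfers this to $m_d$. You have correctly isolated the single non-routine step and correctly located where the $C^{1+\al}$ hypothesis is genuinely used.

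Two small points worth tightening if you flesh this out. First, ``the unique $n$ with $|I_{(\te_1,\dots,\te_n)}|\asymp r$'' is not literally unique; take instead the smallest $n$ with cylinder length $\le r$ and use the one-step bound $|I_{(\te_1,\dots,\te_{n-1})}|\le\lambda|I_{(\te_1,\dots,\te_n)}|$ to control the overshoot. Second, the claim that any weak-$\ast$ limit of your $\mu_n$ satisfies $\mu(I_{\und a})\asymp|I_{\und a}|^d$ needs the quasi-multiplicativity $Z_{m+n}\asymp Z_mZ_n$ coming from bounded distortion together with mixing of $B$; this is routine but should be stated, since a mere subsequential limit of probability measures need not inherit the cylinder comparability without it.
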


The same happens for products $K_1\times K_2$ of Cantor sets (without loss of generality, considered with the
box norm).

\begin{lemma}\label{lemma 3}
Let $K_1,K_2$ be regular Cantor sets of class $C^{1+\al}$, $\al>0$, and let
$d={\rm HD}(K_1)+{\rm HD}(K_2)$. Then $0<m_d(K_1\times K_2)<+\infty$. Moreover, there is $c_1>0$
such that, for any $x\in K_1\times K_2$ and $0\le r\le 1$,
$${c_1}^{-1}\cdot r^d\le m_d\left((K_1\times K_2)\cap B_r(x)\right)\le c_1\cdot r^d.$$
\end{lemma}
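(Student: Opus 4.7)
The plan is to transport the one-dimensional Ahlfors regularity of Lemma~\ref{lemma 2} to the product via the product measure and a mass-distribution argument. Set $d_i=\hd(K_i)$, so $d=d_1+d_2$, and let $\mu=(m_{d_1}|_{K_1})\otimes(m_{d_2}|_{K_2})$, a finite Borel measure supported on $K_1\times K_2$ by Lemma~\ref{lemma 2}. Since we use the box norm on $\R^2$, balls factorise: $B_r(x)=B_r(x_1)\times B_r(x_2)$ whenever $x=(x_1,x_2)$. Consequently, Lemma~\ref{lemma 2} applied to each factor gives a constant $c_0>0$ such that
$$c_0^{-1}\,r^d\;\le\;\mu\bigl((K_1\times K_2)\cap B_r(x)\bigr)\;\le\;c_0\,r^d$$
for every $x\in K_1\times K_2$ and $r\in(0,1]$. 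It remains to compare $\mu$ with the restriction of $m_d$ to $K_1\times K_2$.

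For the upper bound $m_d((K_1\times K_2)\cap B_r(x))\lesssim r^d$ I would choose, for small $\rho>0$, a maximal $\rho$-separated subset $\{y^{(i)}_j\}$ of $K_i\cap B_r(x_i)$: the balls of radius $\rho/2$ around these points are disjoint and each carries $m_{d_i}$-mass at least $c^{-1}(\rho/2)^{d_i}$ by Lemma~\ref{lemma 2}, while their union is contained in $B_{r+\rho/2}(x_i)$ and so has mass at most $c(r+\rho/2)^{d_i}$; the cardinality is therefore at most $C(r/\rho)^{d_i}$. By maximality, the balls of radius $\rho$ around these points already cover $K_i\cap B_r(x_i)$, so their products cover $(K_1\times K_2)\cap B_r(x)$ by at most $C^2(r/\rho)^d$ boxes of diameter $2\rho$ in the box norm. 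Summing the $d$-th powers gives a bound $C'\,r^d$ uniform in $\rho$, and letting $\rho\to 0$ yields the desired estimate.

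For the lower bound I would use the mass distribution principle applied to $\mu$. Given any cover $\mathcal U$ of $(K_1\times K_2)\cap B_r(x)$ with $\|\mathcal U\|<1$, for each $U\in\mathcal U$ meeting $K_1\times K_2$ pick $y_U\in U\cap(K_1\times K_2)$; then $U\subseteq \overline{B_{|U|}(y_U)}$, so the upper half of the displayed estimate above gives $\mu(U)\le c_0|U|^d$. Summing,
$$\mu\bigl((K_1\times K_2)\cap B_r(x)\bigr)\;\le\;c_0\sum_{U\in\mathcal U}|U|^d,$$
and taking the infimum over admissible covers produces
$$m_d\bigl((K_1\times K_2)\cap B_r(x)\bigr)\;\ge\;c_0^{-1}\mu\bigl((K_1\times K_2)\cap B_r(x)\bigr)\;\ge\;c_0^{-2}\,r^d.$$
Choosing $r$ large enough that $K_1\times K_2\subseteq B_r(x)$ for some $x\in K_1\times K_2$ also yields $0<m_d(K_1\times K_2)<\infty$.

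I do not foresee a serious obstacle: both halves reduce to Lemma~\ref{lemma 2} once the box-norm factorisation of balls is noted. The least mechanical step is the mass-distribution inequality $\mu(U)\le c_0|U|^d$ for sufficiently small $U$ meeting the support, but this is immediate by enclosing $U$ in a closed ball centred at a point of $U\cap(K_1\times K_2)$, and everything else is a direct product argument.
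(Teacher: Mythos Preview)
Your argument is correct: once the box-norm factorisation of balls is noted, Lemma~\ref{lemma 2} makes the product measure $\mu$ Ahlfors $d$-regular on $K_1\times K_2$, the efficient covers built from maximal $\rho$-separated subsets of each factor give the upper bound on $m_d$, and the mass distribution principle gives the lower bound. The paper itself does not prove Lemma~\ref{lemma 3} but simply refers the reader to chapter~4 of \cite{PT}; the argument there is in the same spirit---bounded distortion forces the cylinder covers to realise the Hausdorff measure, which is exactly the Ahlfors regularity you are exploiting---so there is no substantive difference to report.
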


See chapter 4 of \cite{PT} for the proofs of these lemmas. In particular, if $Q\in(K_1\times K_2)_\rho$, there is
$x\in (K_1\cup K_2)\cap Q$ such that $B_{\lambda^{-1}\rho}(x)\subseteq Q\subseteq B_{\lambda\rho}(x)$ and so
$$\left({c_1\lambda^d}\right)^{-1}\cdot\rho^d\le m_d((K_1\times K_2)\cap Q)\le c_1\lambda^d\cdot\rho^d.$$
Changing $c_1$ by $c_1\lambda^d$, we may also assume that
$${c_1}^{-1}\cdot \rho^d\le m_d\left((K_1\times K_2)\cap Q\right)\le c_1\cdot \rho^d,$$
which allows us to obtain estimates on the cardinality of $\rho$-decompositions.

\begin{lemma}\label{lemma 4}
Let $K_1,K_2$ be regular Cantor sets of class $C^{1+\al}$, $\al>0$, and let
$d={\rm HD}(K_1)+{\rm HD}(K_2)$. Then there is $c_2>0$ such that, for any $\rho$-decomposition
$(K_1\times K_2)_\rho$, $x\in K_1\times K_2$ and $0\le r\le 1$,
$$\#\left\{Q\in(K_1\times K_2)_\rho; Q\subseteq B_r(x)\right\}\le c_2\cdot \left(\dfrac{r}{\rho}\right)^d\cdot$$
In addition, ${c_2}^{-1}\cdot \rho^{-d}\le\#(K_1\times K_2)_\rho\le c_2\cdot \rho^{-d}$.
\end{lemma}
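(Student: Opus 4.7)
The plan is to derive Lemma 4 directly from the measure estimate
$$c_1^{-1}\rho^d \le m_d\bigl((K_1\times K_2)\cap Q\bigr) \le c_1\rho^d$$
that was established just above the statement, using the fact that the rectangles in a $\rho$-decomposition are pairwise disjoint and cover $K_1\times K_2$. Everything reduces to pigeonhole-style counting against $m_d$.

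For the first inequality, fix $x\in K_1\times K_2$ and $0\le r\le 1$, and let
$$\mathcal{F}_r=\{Q\in(K_1\times K_2)_\rho : Q\subseteq B_r(x)\}.$$
Since the elements of $\mathcal{F}_r$ are pairwise disjoint and all contained in $B_r(x)$, I would sum the lower bound of the displayed inequality over $\mathcal{F}_r$ and compare with the upper bound from Lemma 3:
$$c_1^{-1}\rho^d\cdot\#\mathcal{F}_r \;\le\; \sum_{Q\in\mathcal{F}_r} m_d\bigl((K_1\times K_2)\cap Q\bigr) \;\le\; m_d\bigl((K_1\times K_2)\cap B_r(x)\bigr) \;\le\; c_1\,r^d.$$
Rearranging yields $\#\mathcal{F}_r \le c_1^2\,(r/\rho)^d$, so any $c_2 \ge c_1^2$ handles this part. (If $r<\rho$ then $\mathcal{F}_r$ may well be empty, but the bound is still vacuously consistent, so no separate argument is required.)

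For the second inequality, I would apply the same double-sided bound globally. Because $(K_1\times K_2)_\rho$ covers $K_1\times K_2$ with pairwise disjoint rectangles,
$$m_d(K_1\times K_2) \;=\; \sum_{Q\in(K_1\times K_2)_\rho} m_d\bigl((K_1\times K_2)\cap Q\bigr),$$
and using both sides of the per-piece estimate gives
$$c_1^{-1}\rho^d\cdot\#(K_1\times K_2)_\rho \;\le\; m_d(K_1\times K_2) \;\le\; c_1\rho^d\cdot\#(K_1\times K_2)_\rho.$$
Writing $M:=m_d(K_1\times K_2)$, which by Lemma 3 satisfies $0<M<+\infty$, we get
$$\tfrac{M}{c_1}\cdot\rho^{-d} \;\le\; \#(K_1\times K_2)_\rho \;\le\; c_1 M\cdot\rho^{-d}.$$
Taking $c_2:=\max\{c_1^2,\,c_1 M,\,c_1/M\}$ now satisfies all three inequalities simultaneously, proving the lemma.

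There is essentially no obstacle: the only point requiring any care is making sure the $Q$'s are treated as genuinely disjoint (which they are by the definition of $\rho$-decomposition, modulo boundary sets that have $m_d$-measure zero since a single boundary line meets $K_1\times K_2$ in a set of Hausdorff dimension at most $\max(\text{HD}(K_1),\text{HD}(K_2))<d$) and that the same constant $c_2$ can be chosen for both conclusions, which is achieved above by taking a maximum.
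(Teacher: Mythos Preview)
Your proof is correct and follows essentially the same route as the paper: bound the count inside $B_r(x)$ by sandwiching $\sum_{Q\subseteq B_r(x)} m_d((K_1\times K_2)\cap Q)$ between $c_1^{-1}\rho^d\cdot\#\mathcal F_r$ and $c_1 r^d$, and bound the total count by summing the per-piece estimate over all of $(K_1\times K_2)_\rho$. Your version is in fact a touch more complete, since you explicitly derive the upper bound $\#(K_1\times K_2)_\rho\le c_1 M\rho^{-d}$ and include the corresponding term in $c_2$, whereas the paper only writes out the lower bound and sets $c_2=\max\{c_1^2,\,c_1/m_d(K_1\times K_2)\}$.
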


\begin{proof}
We have
\begin{eqnarray*}
c_1\cdot r^d&\ge&m_d\left((K_1\times K_2)\cap B_r(x)\right)\\
            &\ge&\sum_{Q\subseteq B_r(x)}m_d\left((K_1\times K_2)\cap Q\right)\\
            &\ge&\sum_{Q\subseteq B_r(x)}{c_1}^{-1}\cdot\rho^d\\
            &=&\#\left\{Q\in(K_1\times K_2)_\rho; Q\subseteq B_r(x)\right\}\cdot {c_1}^{-1}\cdot \rho^d
\end{eqnarray*}
and then
$$\#\left\{Q\in(K_1\times K_2)_\rho; Q\subseteq B_r(x)\right\}\le {c_1}^2\cdot\left(\dfrac{r}{\rho}\right)^d\cdot$$
On the other hand,
$$
m_d(K_1\times K_2)=\sum_{Q\in(K_1\times K_2)_\rho}m_d\left((K_1\times K_2)\cap Q\right)
\le\sum_{Q\in(K_1\times K_2)_\rho}c_1\cdot\rho^d,
$$
implying that
$$\#(K_1\times K_2)_\rho\ge {c_1}^{-1}\cdot m_d(K_1\times K_2)\cdot\rho^{-d}.$$
Taking $c_2=\max\{{c_1}^2\,,\,c_1/m_d(K_1\times K_2)\}$, we conclude the proof.
\end{proof}

\section{Proof of Theorem \ref{theorem 1}}\label{proof of theorem 1}

Given rectangles $Q$ and $\tilde Q$, let
$$\Te_{Q,\tilde Q}=\left\{\te\in[-\pi/2,\pi/2];\proj_\te(Q)\cap\proj_\te(\tilde Q)\not=\emptyset\right\}.$$

\begin{lemma}\label{lemma 5}
If $Q,\tilde Q\in(K_1\times K_2)_\rho$ and
$x\in(K_1\times K_2)\cap Q,\tilde x\in(K_1\times K_2)\cap\tilde Q$, then
$$m\left(\Te_{Q,\tilde Q}\right)\le 2\pi\lambda\cdot\dfrac{\rho}{d(x,\tilde x)}\,\cdot$$
\end{lemma}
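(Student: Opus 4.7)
The plan is to convert the projection-intersection condition into a trigonometric inequality and then use the fact that $\cos$ is small only near odd multiples of $\pi/2$.

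First I would observe that $\proj_\te(Q)$ and $\proj_\te(\tilde Q)$ are compact intervals of length at most $|Q| \le \lambda\rho$ and $|\tilde Q| \le \lambda\rho$ respectively. Hence, if $\te \in \Te_{Q,\tilde Q}$, then for $x \in Q$ and $\tilde x \in \tilde Q$, the triangle inequality on $L_\te$ gives
\[
\bigl|\proj_\te(x) - \proj_\te(\tilde x)\bigr| \le |\proj_\te(Q)| + |\proj_\te(\tilde Q)| \le 2\lambda\rho.
\]

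Next I would rewrite the left-hand side in terms of the angle. Set $r = d(x,\tilde x)$ and write the unit vector $(x-\tilde x)/r = (\cos\alpha, \sin\alpha)$ for some $\alpha$. Since $\proj_\te(x) - \proj_\te(\tilde x) = \langle x-\tilde x, v_\te\rangle$, we get
\[
\bigl|\proj_\te(x) - \proj_\te(\tilde x)\bigr| = r\,|\cos(\te - \alpha)|.
\]
So $\Te_{Q,\tilde Q} \subseteq \bigl\{\te \in [-\pi/2, \pi/2] : |\cos(\te - \alpha)| \le 2\lambda\rho/r\bigr\}$.

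Finally I would estimate the measure of this sublevel set. If $2\lambda\rho/r \ge 1$ the bound $2\pi\lambda\rho/r$ exceeds $\pi \ge m(\Te_{Q,\tilde Q})$ and the conclusion is automatic, so assume $c := 2\lambda\rho/r < 1$. On any interval of length $\pi$, the set $\{|\cos\psi| \le c\}$ has measure exactly $2\arcsin c$, and the elementary inequality $\arcsin c \le \pi c/2$ on $[0,1]$ (which follows from the convexity of $\arcsin$ on $[0,1]$ and its values at the endpoints) yields
\[
m(\Te_{Q,\tilde Q}) \le 2\arcsin c \le \pi c = \frac{2\pi\lambda\rho}{d(x,\tilde x)}.
\]

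There is no real obstacle here; the proof is essentially three lines once the projection-intersection is replaced by the scalar inequality. The only point to handle carefully is remembering that the projections of $Q$ and of $\tilde Q$ contribute \emph{both} diameters (giving the factor of $2$ in $2\lambda\rho$) and that $\te$ runs over an interval of length $\pi$, so the sublevel set $\{|\cos(\te-\alpha)|\le c\}$ is a single arc of measure $2\arcsin c$ rather than a union of two arcs.
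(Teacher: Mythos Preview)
Your proof is correct and follows essentially the same approach as the paper: bound $|\proj_\te(x)-\proj_\te(\tilde x)|$ by $2\lambda\rho$ using the diameters of the projected rectangles, express this projection distance as $d(x,\tilde x)$ times a trigonometric factor, and invert using $\arcsin c\le\pi c/2$. The only cosmetic difference is that the paper writes the trigonometric factor as $\sin(|\te-\varphi_0|)$ with $\varphi_0$ the direction perpendicular to $x-\tilde x$, while you write it as $|\cos(\te-\alpha)|$ with $\alpha$ the direction of $x-\tilde x$; these are identical after a $\pi/2$ shift. One small remark: your closing comment that the sublevel set is ``a single arc'' is not literally true for all $\alpha$ (it can wrap around the endpoints of $[-\pi/2,\pi/2]$), but since $|\cos|$ has period $\pi$ the total measure is $2\arcsin c$ regardless, so the estimate stands.
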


\begin{proof} Consider the figure.\\

\begin{center}
\psset{unit=.5cm} \begin{pspicture}(-3,-.7)(8,6)

\psline(-2,-1)(8,4)\psline{*-*}(-1,6)(1.6,.8)\psline{*-*}(4,4)(4.8,2.4)\psline{*-*}(-1,6)(4,4)
\psline[linestyle=dashed,dash=5pt 5pt](-2.5,-.7)(7,-.7)\psline[linestyle=dashed,dash=5pt 5pt](-1,2)(-1,6)
\psline[linestyle=dashed,dash=5pt 5pt]{*-*}(.8,2.4)(4,4)\psarc(-1,6){2}{270}{338}\psarc(-1.4,-.7){1.4}{0}{27}

\uput[90](-1,6){$x$}\uput[45](4,4){$\tilde x$}\uput[-45](1.7,1.2){$\proj_\te(x)$}
\uput[-45](4.8,2.7){$\proj_\te(\tilde x)$}\uput[90](7.4,3.9){$L_\te$}\uput[-60](-.7,4.1){$\te$}
\uput[-60](.6,4.7){$|\te-\vr_0|$}\uput[30](0,-.5){$\te$}
\end{pspicture} \end{center}
Since $\proj_\te(Q)$ has diameter at most $\lambda\rho$,
$d(\proj_\te(x),\proj_\te(\tilde x))\le 2\lambda\rho$ and then, by elementary geometry,
\begin{eqnarray*}
\sin(|\te-\vr_0|)&=  &\dfrac{d(\proj_\te(x),\proj_\te(\tilde x))}{d(x,\tilde x)}\\
                 &\le&2\lambda\cdot\dfrac{\rho}{d(x,\tilde x)}\\
\Longrightarrow\hspace{1cm}|\te-\vr_0|&\le&\pi\lambda\cdot\dfrac{\rho}{d(x,\tilde x)}\,,
\end{eqnarray*}
because $\sin^{-1}y\le \pi y/2$. As $\vr_0$ is fixed, the lemma is proved.
\end{proof}

We point out that, although ingenuous, Lemma \ref{lemma 5} expresses the crucial property of transversality that
makes the proof work, and all results related to Marstrand's theorem use a similar idea in one way or another.
See \cite{R} where this tranversality condition is also exploited.

Fixed a $\rho$-decomposition $(K_1\times K_2)_\rho$, let
$$N_{(K_1\times K_2)_\rho}(\te)=\#\left\{(Q,\tilde Q)\in(K_1\times K_2)_\rho\times (K_1\times K_2)_\rho;
\proj_\te(Q)\cap\proj_\te(\tilde Q)\not=\emptyset\right\}$$
for each $\te\in[-\pi/2,\pi/2]$ and
$$E((K_1\times K_2)_\rho)=\int_{-\pi/2}^{\pi/2}N_{(K_1\times K_2)_\rho}(\te)d\te.$$

\begin{proposition}\label{proposition 1}
Let $K_1,K_2$ be regular Cantor sets of class $C^{1+\al}$, $\al>0$, and let
$d={\rm HD}(K_1)+{\rm HD}(K_2)$. Then there is $c_3>0$ such that, for any $\rho$-decomposition
$(K_1\times K_2)_\rho$,
$$E((K_1\times K_2)_\rho)\le c_3\cdot \rho^{1-2d}.$$
\end{proposition}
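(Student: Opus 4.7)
The plan is to use Fubini's Theorem to exchange the order of integration and summation, writing
\[
E((K_1\times K_2)_\rho) \;=\; \sum_{(Q,\tilde Q)\in(K_1\times K_2)_\rho^{\,2}} m\!\left(\Te_{Q,\tilde Q}\right),
\]
and then bound the right-hand side by combining Lemma \ref{lemma 4} (counting rectangles in balls) with Lemma \ref{lemma 5} (bounding the measure of the set of ``bad'' angles for a single pair).

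To bound the inner sum for a fixed $Q\in(K_1\times K_2)_\rho$, I would fix a point $x\in(K_1\times K_2)\cap Q$ and partition the remaining rectangles $\tilde Q$ into dyadic annuli according to their distance from $x$. That is, for $k\ge 0$ consider the family
\[
\mathcal{A}_k(Q) \;=\; \bigl\{\tilde Q\in(K_1\times K_2)_\rho \,:\, 2^k\rho < d(x,\tilde x) \le 2^{k+1}\rho \text{ for some } \tilde x\in\tilde Q\cap(K_1\times K_2)\bigr\},
\]
together with the bounded collection of ``nearby'' rectangles $\tilde Q$ with $d(x,\tilde x)\le \rho$, for which I simply use the trivial bound $m(\Te_{Q,\tilde Q})\le \pi$. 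Lemma \ref{lemma 4} gives $\#\mathcal{A}_k(Q)\le c_2\cdot 2^{(k+1)d}$, while Lemma \ref{lemma 5} gives $m(\Te_{Q,\tilde Q}) \le 2\pi\lambda\cdot 2^{-k}$ for each $\tilde Q\in\mathcal{A}_k(Q)$. Since $K_1\times K_2$ has bounded diameter, only indices $k\le K := O(\log(1/\rho))$ contribute. Thus
\[
\sum_{\tilde Q} m\!\left(\Te_{Q,\tilde Q}\right) \;\le\; O(1) \;+\; \sum_{k=0}^{K} c_2\cdot 2^{(k+1)d}\cdot 2\pi\lambda\cdot 2^{-k} \;=\; O\!\left(\sum_{k=0}^{K} 2^{k(d-1)}\right).
\]

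This is the step where the hypothesis $d>1$ plays its essential role: because the exponent $d-1$ is strictly positive, the geometric sum is dominated by its last term, giving
\[
\sum_{\tilde Q} m\!\left(\Te_{Q,\tilde Q}\right) \;\le\; O\bigl(2^{K(d-1)}\bigr) \;=\; O\bigl(\rho^{1-d}\bigr).
\]
Finally, summing over $Q\in(K_1\times K_2)_\rho$ and invoking the cardinality bound $\#(K_1\times K_2)_\rho \le c_2\rho^{-d}$ from Lemma \ref{lemma 4}, we obtain $E((K_1\times K_2)_\rho) \le c_3\cdot \rho^{1-2d}$ for a suitable constant $c_3>0$.

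The main obstacle is really just bookkeeping: one must verify that Lemma \ref{lemma 4} applies to rectangles merely intersecting (rather than strictly contained in) a ball $B_r(x)$, which costs only a multiplicative constant by slightly enlarging $r$. Aside from that, the argument is a direct transversality/counting estimate, and the condition $d>1$ is used precisely to ensure that the dominant contribution to $E$ comes from the largest dyadic scale.
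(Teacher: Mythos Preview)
Your proposal is correct and follows essentially the same approach as the paper: both exchange sum and integral to obtain $E=\sum_{Q,\tilde Q}m(\Te_{Q,\tilde Q})$, decompose the pairs into dyadic annuli according to $d(x,\tilde x)$, and combine Lemma~\ref{lemma 4} with Lemma~\ref{lemma 5} to bound each annulus. The only cosmetic difference is the indexing: the paper uses absolute scales $2^{-s}<d(x,\tilde x)\le 2^{-s+1}$ (so the geometric series $\sum_s 2^{s(1-d)}$ converges and the factor $\rho^{1-2d}$ appears directly), whereas you use scales $2^k\rho$ relative to $\rho$ (so the series $\sum_k 2^{k(d-1)}$ is dominated by its last term, producing the same $\rho^{1-d}$ before summing over~$Q$); you are also slightly more explicit about the nearby/diagonal pairs and the ``contained vs.\ intersecting'' issue in Lemma~\ref{lemma 4}.
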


\begin{proof}
Let $s_0=\left\lceil\log_2 \rho^{-1}\right\rceil$ and choose, for each $Q\in(K_1\times K_2)_\rho$,
a point $x\in (K_1\times K_2)\cap Q$. By a double counting and using Lemmas \ref{lemma 4} and \ref{lemma 5}, we have
\begin{eqnarray*}
E((K_1\times K_2)_\rho)&=&\sum_{Q,\tilde Q\in (K_1\times K_2)_\rho}m\left(\Te_{Q,\tilde Q}\right)\\
&=&\sum_{s=1}^{s_0}\sum_{Q,\tilde Q\in (K_1\times K_2)_\rho\atop{2^{-s}< d(x,\tilde x)\le 2^{-s+1}}}
m\left(\Te_{Q,\tilde Q}\right)\\
&\le&\sum_{s=1}^{s_0}c_2\cdot\rho^{-d}\left[c_2\cdot\left(\dfrac{2^{-s+1}}{\rho}\right)^d\right]
\cdot\left(2\pi\lambda\cdot\dfrac{\rho}{2^{-s}}\right)\\
&=&2^{d+1}\pi\lambda{c_2}^2\cdot\left(\sum_{s=1}^{s_0}2^{s(1-d)} \right)\cdot\rho^{1-2d}.
\end{eqnarray*}
Because $d>1$, $c_3=2^{d+1}\pi\lambda{c_2}^2\cdot\sum_{s\ge1}2^{s(1-d)}<+\infty$ satisfies the
required inequality.
\end{proof}

This implies that, for each $\ve>0$, the upper bound
\begin{equation}\label{equacao 2}
N_{(K_1\times K_2)_\rho}(\te)\le\dfrac{c_3\cdot\rho^{1-2d}}{\ve}
\end{equation}
holds for every $\te$ except for a set of measure at most $\ve$. Letting
$c_4={c_2}^{-2}\cdot{c_3}^{-1}$, we will show that
\begin{equation}\label{equacao 3}
m\left(\proj_\te\left((K_1\times K_2)_\rho\right)\right)\ge c_4\cdot\ve
\end{equation}
for every $\te$ satisfying (\ref{equacao 2}). For this, divide $[-2,2]\subseteq L_\te$ in
$\left\lfloor 4/\rho\right\rfloor$ intervals $J_1^\rho,\ldots,$ $J_{\left\lfloor 4/\rho\right\rfloor}^\rho$
of equal lenght (at least $\rho$) and define
$$s_{\rho,i}=\#\left\{Q\in(K_1\times K_2)_\rho\,;\,\proj_\te(x)\in J_i^\rho\right\},\ \ i=1,\ldots,\left\lfloor 4/\rho\right\rfloor.$$
Then $\sum_{i=1}^{\left\lfloor 4/\rho\right\rfloor}s_{\rho,i}=\#(K_1\times K_2)_\rho$ and
$$\sum_{i=1}^{\left\lfloor 4/\rho\right\rfloor}{s_{\rho,i}}^2\le N_{(K_1\times K_2)_\rho}(\te)
\le c_3\cdot\rho^{1-2d}\cdot\ve^{-1}.$$
Let $S_\rho=\{1\le i\le\left\lfloor 4/\rho\right\rfloor; s_{\rho,i}>0\}$. By Cauchy-Schwarz inequality,
$$\#S_\rho\ge\dfrac{\displaystyle\left(\sum_{i\in S_\rho}s_{\rho,i}\right)^2}{\displaystyle\sum_{i\in S_\rho}{s_{\rho,i}}^2}
\ge\dfrac{{c_2}^{-2}\cdot\rho^{-2d}}{c_3\cdot\rho^{1-2d}\cdot\ve^{-1}}=\dfrac{c_4\cdot\ve}{\rho}\,\cdot$$
For each $i\in S_\rho$, the interval $J_i^\rho$ is contained in $\proj_\te((K_1\times K_2)_\rho)$ and then
$$m\left(\proj_\te((K_1\times K_2)_\rho)\right)\ge c_4\cdot\ve,$$
which proves (\ref{equacao 3}).

\begin{proof}[Proof of Theorem \ref{theorem 1}]
Fix a decreasing sequence
\begin{equation}\label{equacao 6}
(K_1\times K_2)_{\rho_1}\succ(K_1\times K_2)_{\rho_2}\succ\cdots
\end{equation}
of decompositions such that $\rho_n\rightarrow0$ and, for each $\ve>0$, consider the sets
$$G_\ve^n=\left\{\te\in[-\pi/2,\pi/2]\,;\,N_{(K_1\times K_2)_{\rho_n}}(\te)
\le c_3\cdot{\rho_n}^{1-2d}\cdot\ve^{-1}\right\},\ \ n\ge 1.$$
Then $m\left([-\pi/2,\pi/2]\backslash G_\ve^n\right)\le\ve$,
and the same holds for the set
$$G_\ve=\bigcap_{n\ge 1}\bigcup_{l=n}^{\infty}G_\ve^l\,.$$
If $\te\in G_\ve$, then
$$m\left(\proj_\te((K_1\times K_2)_{\rho_n})\right)\ge c_4\cdot\ve\,,\ \text{ for infinitely many }n,$$
which implies that $m\left(\proj_\te(K_1\times K_2)\right)\ge c_4\cdot\ve$.
Finally, the set $G=\cup_{n\ge 1}G_{1/n}$ satisfies $m([-\pi/2,\pi/2]\backslash G)=0$ and
$m\left(\proj_\te(K_1\times K_2)\right)>0$, for any $\te\in G$.
\end{proof}

\section{Proof of Theorem \ref{theorem 2}}

Given any $X\subset K_1\times K_2$, let $(X)_\rho$ be the restriction of the $\rho$-decomposition
$(K_1\times K_2)_\rho$ to those rectangles
which intersect $X$. As done in Section \ref{proof of theorem 1}, we'll obtain estimates on the
cardinality of $(X)_\rho$. Being a subset of $K_1\times K_2$, the upper estimates from Lemma \ref{lemma 4}
also hold for $X$. The lower estimate is given by

\begin{lemma}\label{lemma 6}
Let $X$ be a subset of $K_1\times K_2$ such that $m_d(X)>0$. Then there is $c_6=c_6(X)>0$ such
that, for any $\rho$-decomposition $(K_1\times K_2)_\rho$ and $0\le r\le 1$,
$$c_6\cdot\rho^{-d}\le \#(X)_\rho\le c_2\cdot\rho^{-d}.$$
\end{lemma}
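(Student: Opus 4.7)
The plan is to reduce both inequalities to the per-rectangle mass estimate for $m_d$ that was derived immediately after Lemma \ref{lemma 3}, namely $c_1^{-1}\rho^d \le m_d((K_1\times K_2)\cap Q)\le c_1\rho^d$ for each $Q\in(K_1\times K_2)_\rho$.

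For the upper bound, I would simply observe that $(X)_\rho$ is by definition a sub-collection of $(K_1\times K_2)_\rho$, so
$$\#(X)_\rho\le\#(K_1\times K_2)_\rho\le c_2\cdot\rho^{-d}$$
by Lemma \ref{lemma 4}. No use of the hypothesis $m_d(X)>0$ is needed here.

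For the lower bound, which is the only nontrivial direction, I would use countable subadditivity of the Hausdorff measure together with the fact that $X\subseteq\bigcup_{Q\in(X)_\rho}Q$ (by definition of $(X)_\rho$ as those rectangles of the decomposition that meet $X$). Since $X\subseteq K_1\times K_2$, this yields
$$m_d(X)\le\sum_{Q\in(X)_\rho}m_d\bigl((K_1\times K_2)\cap Q\bigr)\le \#(X)_\rho\cdot c_1\rho^d,$$
and therefore $\#(X)_\rho\ge (m_d(X)/c_1)\cdot\rho^{-d}$. Setting $c_6=m_d(X)/c_1$ completes the proof; the hypothesis $m_d(X)>0$ is precisely what guarantees $c_6>0$ (and the dependence $c_6=c_6(X)$ is visible in this formula).

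There is no real obstacle here: once the per-rectangle estimate of Lemma \ref{lemma 3} is in hand, both bounds are one-line consequences, the upper one by inclusion of collections and the lower one by subadditivity. The only subtlety worth flagging is that one should not attempt to use disjointness of the rectangles in $(X)_\rho$ (the closed rectangles may share boundaries), which is why subadditivity rather than additivity is the right tool; and that $c_6$ legitimately depends on $X$ through $m_d(X)$, consistent with the statement.
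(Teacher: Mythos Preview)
Your argument is correct and in fact slightly more elementary than the paper's. Both proofs bound $m_d(X)$ above by summing the $m_d$-mass of $X$ over the rectangles of $(X)_\rho$; the only difference lies in how each bounds the individual term $m_d(X\cap Q)$. The paper invokes a density theorem for sets of finite Hausdorff measure (Theorem~5.6 of \cite{F}) to obtain a constant $c_5=c_5(X)$ with $m_d(X\cap B_r(x))\le c_5 r^d$, and then uses $Q\subseteq B_{\lambda\rho}(x)$. You instead observe that $X\subseteq K_1\times K_2$ gives $m_d(X\cap Q)\le m_d((K_1\times K_2)\cap Q)\le c_1\rho^d$ directly from the per-rectangle estimate already recorded after Lemma~\ref{lemma 3}. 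Your route avoids the external reference and yields an explicit $c_6=m_d(X)/c_1$, at the (harmless) cost of using the ambient regularity of $K_1\times K_2$ rather than an intrinsic density bound for $X$.
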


\begin{proof}
As $m_d(X)<+\infty$, there exists $c_5=c_5(X)>0$ (see Theorem 5.6 of \cite{F}) such that
$$m_d\left(X\cap B_r(x)\right)\le c_5\cdot r^d\,,\ \text{ for all }x\in X\text{ and }0\le r\le 1,$$
and then
$$m_d(X)=\sum_{Q\in (X)_\rho}m_d\left(X\cap Q\right)\le\sum_{Q\in (X)_\rho}c_5\cdot(\la\rho)^d
=\left({c_5}\cdot\la^d\right)\cdot\rho^d\cdot\#(X)_\rho\,.$$
Just take $c_6={c_5}^{-1}\cdot\la^{-d}\cdot m_d(X)$.
\end{proof}

\begin{proposition}\label{proposition 2}
The measure $\mu_\te=(\proj_\te)_*(m_d|_{K_1\times K_2})$ is absolutely continuous with respect to $m$, for
$m$-almost every $\te\in\R$.
\end{proposition}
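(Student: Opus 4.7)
\emph{Approach.} My plan is to refine quantitatively the argument for Theorem~\ref{theorem 1} through an $L^2$ discretization of $\mu_\te$; the same computation will yield both Proposition~\ref{proposition 2} and Theorem~\ref{theorem 2}. Fix a nested decreasing sequence of decompositions $(K_1 \times K_2)_{\rho_n}$ with $\rho_n \to 0$. For each $\te$ and each $n$, partition $L_\te$ into disjoint intervals $J_i^n$ of length $\rho_n$ and introduce the piecewise constant density approximation
$$f_n^\te(y) = \sum_i \frac{\mu_\te(J_i^n)}{|J_i^n|}\, \mathbf{1}_{J_i^n}(y).$$
Since the partitions refine and generate the Borel $\sigma$-algebra on $L_\te$, a standard computation using uniform continuity of test functions shows that $f_n^\te\, dm$ converges to $\mu_\te$ in the weak-$*$ topology on measures.

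\emph{The $L^2$ bound.} Setting $s_{n,i} = \#\{Q \in (K_1 \times K_2)_{\rho_n} : \proj_\te(Q) \cap J_i^n \neq \emptyset\}$, Lemma~\ref{lemma 3} gives $\mu_\te(J_i^n) \le c_1 \rho_n^d\, s_{n,i}$, so
$$\|f_n^\te\|_{L^2}^2 = \sum_i \frac{\mu_\te(J_i^n)^2}{|J_i^n|} \le c_1^2\, \rho_n^{2d-1} \sum_i s_{n,i}^2.$$
Two rectangles whose projections both meet $J_i^n$ have projections within distance $(2\la+1)\rho_n$; grouping the $\rho_n$-rectangles inside rectangles of a coarser decomposition at scale $\rho_n' = O(\rho_n)$ yields $\sum_i s_{n,i}^2 \le C_1 \cdot N_{(K_1 \times K_2)_{\rho_n'}}(\te)$. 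Integrating in $\te$ and applying Proposition~\ref{proposition 1},
$$\int_{-\pi/2}^{\pi/2} \|f_n^\te\|_{L^2}^2 \, d\te \le C_1 c_1^2\, \rho_n^{2d-1}\, E\bigl((K_1 \times K_2)_{\rho_n'}\bigr) \le C_2.$$

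\emph{Conclusion.} By Fatou's lemma, $\liminf_n \|f_n^\te\|_{L^2}^2 < +\infty$ for $m$-a.e. $\te$. For such $\te$, a subsequence of $f_n^\te$ is uniformly bounded in $L^2$, and weak $L^2$-compactness produces a further subsequence converging weakly to some $g_\te \in L^2$. Testing against continuous compactly supported $\phi$, combining the weak-$*$ convergence $f_n^\te\, dm \to \mu_\te$ with the weak $L^2$ convergence $f_n^\te \to g_\te$ yields $\int \phi\, d\mu_\te = \int \phi\, g_\te\, dm$ for every such $\phi$, hence $\mu_\te = g_\te\, dm$. This establishes the absolute continuity claimed in Proposition~\ref{proposition 2} and, since $\chi_\te = g_\te \in L^2$, also proves Theorem~\ref{theorem 2} in one stroke. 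The main technical obstacle is the combinatorial step $\sum_i s_{n,i}^2 \le C_1 \cdot N_{(K_1 \times K_2)_{\rho_n'}}(\te)$: two rectangles can share a partition interval $J_i^n$ without their projections actually intersecting, but a bounded enlargement preserves both the transversality estimate of Lemma~\ref{lemma 5} and the crucial exponent $\rho^{1-2d}$ from Proposition~\ref{proposition 1}.
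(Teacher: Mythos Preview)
Your argument is correct and takes a genuinely different route from the paper's. The paper proves Proposition~\ref{proposition 2} separately, by contrapositive: for every $\te$ in the explicit full-measure set $G$ built in Theorem~\ref{theorem 1}, it shows that any $X\subset K_1\times K_2$ with $m_d(X)>0$ satisfies $m(\proj_\te(X))>0$. This requires an extra ingredient, Lemma~\ref{lemma 6} (a density upper bound for arbitrary subsets, quoted from Falconer), which gives lower cardinality bounds for $(X)_\rho$ so that the Cauchy--Schwarz counting from Theorem~\ref{theorem 1} can be rerun on $X$. Only after $\chi_\te$ is known to exist does the paper turn to Theorem~\ref{theorem 2}, bounding $\|\chi_{\te,n}\|_2$ (and $\chi_{\te,n}$ is exactly your $f_n^\te$, now viewed as a conditional expectation) and using Fatou via the pointwise convergence $\chi_{\te,n}\to\chi_\te$. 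Your energy-method approach bypasses Lemma~\ref{lemma 6} entirely and gets both statements at once via weak $L^2$-compactness; the paper's approach, in exchange, stays purely combinatorial and identifies an explicit good set $G$. One comment on your flagged obstacle: passing to a coarser nested decomposition at scale $\rho_n'=O(\rho_n)$ does not obviously force the containing rectangles' projections to overlap, since $Q'$ need not extend $Q$ in the right direction. It is cleaner to integrate $\sum_i s_{n,i}^2$ directly in $\te$, noting that each contributing pair $(Q,\tilde Q)$ has marked points with $d(\proj_\te(x_Q),\proj_\te(x_{\tilde Q}))\le C\rho_n$, so Lemma~\ref{lemma 5} and Proposition~\ref{proposition 1} apply with only a constant change --- which is essentially what your final sentence says.
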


\begin{proof}
Note that the implication
\begin{equation}\label{equacao 4}
X\subset K_1\times K_2\, ,\ m_d(X)>0\ \ \Longrightarrow\ \ m(\proj_\te(X))>0
\end{equation}
is sufficient for the required absolute continuity. In fact, if $Y\subset L_\te$ satisfies $m(Y)=0$, then
$$\mu_\te(Y)=m_d(X)=0\,,$$
where $X={\proj_\te}^{-1}(Y)$. Otherwise, by (\ref{equacao 4}) we would have $m(Y)=m(\proj_\te(X))>0$,
contradicting the assumption.

We prove that (\ref{equacao 4}) holds for every $\te\in G$, where $G$ is the set defined in the proof of Theorem \ref{theorem 1}.
The argument is the same made after Proposition \ref{proposition 1}: as, by the previous lemma,
$\#(X)_\rho$ has lower and upper estimates depending only on $X$ and $\rho$, we obtain that
$$m\left(\proj_\te((X)_{\rho_n})\right)\ge {c_3}^{-1}\cdot{c_6}^2\cdot\ve\,,\ \text{ for infinitely many }n,$$
and then $m(\proj_\te(X))>0$.
\end{proof}

Let $\chi_\te=d\mu_\te/dm$. In principle, this is a $L^1$ function. We prove that it is a $L^2$ function,
for every $\te\in G$.

\begin{proof}[Proof of Theorem 2.]
Let $\te\in G_{1/m}$, for some $m\in\N$. Then
\begin{equation}\label{equacao 5}
N_{(K_1\times K_2)_{\rho_n}}(\te)\le c_3\cdot {\rho_n}^{1-2d}\cdot m\,,\ \text{for infinitely many }n.
\end{equation}
For each of these $n$, consider the partition $\mathcal P_n=\{J_1^{\rho_n},\ldots,J_{\lfloor 4/\rho_n\rfloor}^{\rho_n}\}$
of $[-2,2]\subset L_\te$ into intervals of equal length and let
$\chi_{\te,n}$ be the expectation of $\chi_\te$ with respect to $\mathcal P_n$. As $\rho_n\rightarrow 0$,
the sequence of functions $(\chi_{\te,n})_{n\in\N}$ converges pointwise to $\chi_\te$. By Fatou's Lemma,
we're done if we prove that each $\chi_{\te,n}$ is $L^2$ and its $L^2$-norm
$\left\|\chi_{\te,n}\right\|_2$ is bounded above by a constant independent of $n$.

By definition,
$$\mu_\te(J_i^{\rho_n})=m_d\left(({\proj_\te})^{-1}\left(J_i^{\rho_n}\right)\right)\le s_{\rho_n,i}
\cdot c_1\cdot{\rho_n}^d,\ \ i=1,2,\ldots,{\lfloor 4/\rho_n\rfloor},$$
and then
$$\chi_{\te,n}(x)=\dfrac{\mu_\te(J_i^{\rho_n})}{\left|J_i^{\rho_n}\right|}\le\dfrac{c_1\cdot s_{\rho_n,i} \cdot{\rho_n}^d}{\left|J_i^{\rho_n}\right|}\ ,\ \ \forall\,x\in J_i^{\rho_n},$$
implying that
\begin{eqnarray*}
\left\|\chi_{\te,n}\right\|_2^2&=&\int_{L_\te}\left|\chi_{\te,n}\right|^2dm\\
&=&\sum_{i=1}^{\lfloor 4/\rho_n\rfloor}\int_{J_i^{\rho_n}}\left|\chi_{\te,n}\right|^2dm\\
&\le&\sum_{i=1}^{\lfloor 4/\rho_n\rfloor}|J_i^{\rho_n}|\cdot\left(\dfrac{c_1\cdot s_{\rho_n,i}\cdot{\rho_n}^d}{|J_i^{\rho_n}|}\right)^2\\
&\le&{c_1}^2\cdot {\rho_n}^{2d-1}\cdot\sum_{i=1}^{\lfloor 4/\rho_n\rfloor}{s_{\rho_n,i}}^2\\
&\le&{c_1}^2\cdot {\rho_n}^{2d-1}\cdot N_{(K_1\times K_2)_{\rho_n}}(\te).
\end{eqnarray*}
In view of (\ref{equacao 5}), this last expression is bounded above by
$$\left({c_1}^2\cdot {\rho_n}^{2d-1}\right)\cdot\left(c_3\cdot {\rho_n}^{1-2d}\cdot m\right)={c_1}^2\cdot c_3\cdot m\,,$$
which is a constant independent of $n$.
\end{proof}

\section{Concluding remarks}

The proofs of Theorems \ref{theorem 1} and \ref{theorem 2} work not just for the case of products of
regular Cantor sets, but in greater generality, whenever $K\subset\R^2$ is a Borel set for which
there is a constant $c>0$ such that, for any $x\in K$ and $0\le r\le 1$,
$$c^{-1}\cdot r^d\le m_d(K\cap B_r(x))\le c\cdot r^d,$$
since this alone implies the existence of $\rho$-decompositions for $K$.

The good feature of the proof is that the discretization idea may be applied to other contexts. For example,
we prove in \cite{LM} a Marstrand type theorem in an arithmetical context.

\section*{Acknowledgments}
The authors are thankful to IMPA for the excellent ambient during the preparation of this
manuscript. The authors are also grateful to Carlos Matheus for carefully reading the preliminary version of this
work and the anonymous referee for many useful and detailed recommendations. This work was financially supported
by CNPq-Brazil and Faperj-Brazil.

\bibliographystyle{amsplain}

\begin{thebibliography}{99}

\bibitem{F}
{\bf K. Falconer},
\newblock{\it The geometry of fractal sets},
\newblock Cambridge Tracts in Mathematics, Cambridge (1986).

\bibitem{K}
{\bf R. Kaufman},
\newblock{\it On Hausdorff dimension of projections},
\newblock Mathematika {\bf 15} (1968), 153--155.

\bibitem{LM}
{\bf Y. Lima and C.G. Moreira},
\newblock{\it A Marstrand theorem for subsets of integers},
\newblock available at http://arxiv.org/abs/1011.0672.

\bibitem{Ma}
{\bf J.M. Marstrand},
\newblock{\it Some fundamental geometrical properties of plane sets of fractional dimensions},
\newblock Proceedings of the London Mathematical Society {\bf 3} (1954), vol. 4, 257--302.

\bibitem{M1}
{\bf P. Mattila},
\newblock{\it Hausdorff dimension, projections, and the Fourier transform},
\newblock  Publ. Mat. {\bf 48} (2004), no. 1, 3--48.


\bibitem{Mo}
{\bf C.G. Moreira},
\newblock{\it A dimension formula for arithmetic sums of regular Cantor sets},
\newblock to appear.

\bibitem{MY}
{\bf C.G. Moreira and J.C. Yoccoz},
\newblock{\it Stable Intersections of Cantor Sets with Large Hausdorff Dimension},
\newblock Annals of Mathematics {\bf 154} (2001), 45--96.

\bibitem{MY2}
{\bf C.G. Moreira and J.C. Yoccoz},
\newblock{\it Tangences homoclines stables pour des ensembles hyperboliques de grande dimension fractale},
\newblock Annales scientifiques de l'ENS 43, fascicule 1 (2010).

\bibitem{PT}
{\bf J. Palis and F. Takens},
\newblock{\it Hyperbolicity and sensitive chaotic dynamics at homoclinic bifurcations},
\newblock Cambridge studies in advanced mathematics, Cambridge (1993).

\bibitem{PY}
{\bf J. Palis and J.C. Yoccoz},
\newblock{\it On the Arithmetic Sum of Regular Cantor Sets},
\newblock Annales de l'Inst. Henri Poincaré, Analyse Non Lineaire {\bf 14} (1997), 439--456.

\bibitem{R}
{\bf M. Rams},
\newblock{\it Exceptional parameters for iterated function systems with overlaps},
\newblock Period. Math. Hungar. {\bf 37} (1998), no. 1-3, 111--119.

\bibitem{R2}
{\bf M. Rams},
\newblock{\it Packing dimension estimation for exceptional parameters},
\newblock Israel J. Math. {\bf 130} (2002), 125--144.


\end{thebibliography}

\end{document}